\definecolor{orchid}{rgb}{0.85, 0.44, 0.84}
\begin{document}

\begin{frontmatter}

\titledata{The Pairing-Hamiltonian property\\ in graph prisms}{}

\authordata{Marién Abreu}
{Dipartimento di Matematica, Informatica ed Economia \\ Universit\`{a} degli Studi della Basilicata, Italy}
{marien.abreu@unibas.it}{The research that led to the present paper was partially supported by a grant of the group GNSAGA
of INdAM.}

\authordata{Giuseppe Mazzuoccolo}
{Dipartimento di Scienze Fisiche, Informatiche e Matematiche\\ Universit\`{a} degli Studi di Modena e Reggio Emilia, Italy}
{}{Corresponding author: giuseppe.mazzuoccolo@unimore.it}

\authordata{Federico Romaniello}
{Dipartimento di Matematica ``Giuseppe Peano"\\ Universit\`{a} di Torino, Italy}
{federico.romaniello@unito.it}{}

\authordata{Jean Paul Zerafa}
{St. Edward's College, Triq San Dwardu\\ Birgu (Citt\`{a} Vittoriosa), BRG 9039, Cottonera, Malta; \\
Faculty of Economics, Management \& Accountancy, and Faculty of Education\\ L-Universit\`{a} ta' Malta, Msida, MSD 2080, Malta}
{zerafa.jp@gmail.com}{}

\keywords{pairing, perfect matching, Hamiltonian cycle, prism graph, graph product}
\msc{05C76, 05C70, 05C45}

\begin{abstract}
Let $G$ be a graph of even order, and consider $K_G$ as the complete graph on the same vertex set as $G$. A perfect matching of $K_G$ is called a pairing of $G$. If for every pairing $M$ of $G$ it is possible to find a perfect matching $N$ of $G$ such that $M \cup N$ is a Hamiltonian cycle of $K_G$, then $G$ is said to have the Pairing-Hamiltonian property, or PH-property, for short. In 2007, Fink [\emph{J. Combin. Theory Ser. B}, \textbf{97}] proved that for every $d\geq 2$, the $d$-dimensional hypercube $\mathcal{Q}_d$ has the PH-property, thus proving a
conjecture posed by Kreweras in 1996. 
In this paper we extend Fink's result by proving that given a graph $G$ having the PH-property, the prism graph $\mathcal{P}(G)=G \square K_2$ of $G$ has the PH-property as well. Moreover, if $G$ is a connected graph, we show that there exists a positive integer $k_0$
such that the $k^{\textrm{th}}$-prism of a graph $\mathcal{P}^k(G)$ has the PH-property for all $k \ge k_0$. 
\end{abstract}
\end{frontmatter}

\section{Introduction}
The problem of extending perfect matchings of a graph to a Hamiltonian cycle has been first considered by Las Vergnas \cite{LasVergnas} and H\"{a}ggkvist \cite{Hag} in the 1970s. They both proved Ore-type conditions which ensure that every perfect matching of a graph having some initial conditions can be extended to a Hamiltonian cycle. 

Some years later, Kreweras \cite{Kre} conjectured that any perfect matching of the hypercube $\mathcal{Q}_d$, $d\geq 2$, can be extended to a Hamiltonian cycle. This conjecture was proved in 2007 by Fink \cite{Fink}. Actually, he proved a stronger version of the problem. Given a graph $G$, let $K_G$ denote the complete graph on the same vertex set $V(G)$ of $G$. Fink shows that every perfect matching of $K_{\mathcal{Q}_d}$, and not only the perfect matchings of $\mathcal{Q}_d$, can be extended to a Hamiltonian cycle of $K_{\mathcal{Q}_d}$, by using only edges of $\mathcal{Q}_d$. More generally, for a graph $G$ of even order, a perfect matching of $K_G$ is said to be a \emph{pairing} of $G$. Given a pairing $M$ of $G$, we say that $M$ can be \emph{ extended } to a Hamiltonian cycle $H$ of $K_G$ if we can find a perfect matching $N$ of $G$ such that $M \cup N = E(H)$, where $E(H)$ is the set of edges of $H$.

A graph $G$ is said to have the \emph{Pairing-Hamiltonian property} (or, the PH-property for short), if every pairing $M$ of $G$ can be extended to a Hamiltonian cycle as described above. For simplicity, we shall also say that a graph $G$ is PH if it has the PH-property. This notation was introduced in \cite{AAAHST}, where amongst other results, a classification of cubic graphs that admit the PH-property was given: these are the complete graph $K_4$, the complete bipartite graph $K_{3,3}$, and the cube $\mathcal{Q}_3$. We remark that this was the first non-trivial classification of graphs (having regular degree) admitting the PH-property, as, the only 2-regular graph admitting the PH-property is the cycle on 4 vertices, which happens to be $\mathcal{Q}_2$. We also remark that there is an infinite number of 4-regular graphs having the PH-property (see \cite{AAAHST, GauZer}). Following such a terminology we can state Fink's result from \cite{Fink} as follows.

\begin{theorem}[Fink, \cite{Fink} 2007]\label{fink}
The hypercube $\mathcal{Q}_d$ has the PH-property, for every $d\geq 2$.
\end{theorem}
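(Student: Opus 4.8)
The plan is to prove the statement by induction on $d$, exploiting the recursive structure of the hypercube: $\mathcal{Q}_d$ is the prism over $\mathcal{Q}_{d-1}$, i.e.\ two copies $Q^0$ and $Q^1$ of $\mathcal{Q}_{d-1}$ (the vertices whose last coordinate is $0$, respectively $1$) joined by the perfect matching $V$ of \emph{vertical} edges $v^0v^1$. Rather than the bare PH-property, I would carry through the induction the stronger statement $P(d)$: for every pairing $M$ of $\mathcal{Q}_d$ and every edge $e\in E(\mathcal{Q}_d)\setminus M$, there is a perfect matching $N$ of $\mathcal{Q}_d$ with $e\in N$ such that $M\cup N$ is a Hamiltonian cycle of $K_{\mathcal{Q}_d}$. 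The freedom to prescribe an edge of the extension is precisely what will let me control how the two halves are glued together. The base case $d=2$ is $\mathcal{Q}_2=C_4$, and $P(2)$ follows from a direct inspection of the finitely many pairings of $K_4$.

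For the inductive step I would first record a parity observation. Writing $c$ for the number of \emph{crossing} edges of $M$ (those with one endpoint in each half) and $i_0$ for the number of $M$-edges internal to $Q^0$, counting the vertices of $Q^0$ gives $2^{d-1}=2i_0+c$, so $c$ is even. Label the crossing edges $a_1b_1,\dots,a_{2m}b_{2m}$ with $a_k\in Q^0$ and $b_k\in Q^1$. I then build an auxiliary pairing $M_0$ of $Q^0$ by keeping every internal $M$-edge and adding artificial edges that pair $a_1,\dots,a_{2m}$ among themselves, and likewise an auxiliary pairing $M_1$ of $Q^1$ on the $b_k$. Applying $P(d-1)$ to each copy produces Hamiltonian cycles $C_0=M_0\cup N_0$ of $Q^0$ and $C_1=M_1\cup N_1$ of $Q^1$, where $N_0,N_1$ are perfect matchings of the two copies.

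The reassembly is where the real work lies. Discarding the artificial edges and reinstating the genuine crossing edges of $M$, the union $M\cup N_0\cup N_1$ is a simple $2$-regular spanning subgraph, hence a disjoint union of cycles: deleting the artificial edges from $C_0$ and $C_1$ leaves a system of paths with endpoints $\{a_k\}$ and $\{b_k\}$ that the crossing edges stitch together. A short bookkeeping argument shows that a naive matching of the artificial edges generally yields $m$ separate cycles rather than one, so the crux is to amalgamate them. For this I would use a \emph{vertical splice}: if an $N_0$-edge $x^0y^0$ and an $N_1$-edge $x^1y^1$ sit above the same edge $xy$ of $\mathcal{Q}_{d-1}$ but in distinct cycles, then deleting them and inserting the vertical edges $x^0x^1,y^0y^1\in V$ merges the two cycles while keeping the extension a perfect matching of $\mathcal{Q}_d$; the strengthened hypothesis $P(d-1)$ is what should let me force $N_0$ and $N_1$ to share such a projected edge and to keep $e$ intact. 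I expect this merging to be the main obstacle: one must guarantee that the $2$-factor can always be driven down to a single component while honouring the prescribed edge, which requires careful control of the cyclic positions of the crossing vertices $a_k,b_k$ so that a compatible pair of edges for a splice is always available. Managing this alignment, rather than any step in isolation, is the delicate point on which the induction turns.
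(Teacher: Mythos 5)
Your proposal is incomplete at exactly the point you flag yourself: the merging step. Nothing in your construction guarantees that two cycles of the $2$-factor $M \cup N_0 \cup N_1$ share a projected edge, i.e.\ that there is an edge $xy \in E(\mathcal{Q}_{d-1})$ with $x^0y^0 \in N_0$ and $x^1y^1 \in N_1$ lying in distinct cycles. The matchings $N_0$ and $N_1$ come from two independent applications of the inductive hypothesis, so their projections to $\mathcal{Q}_{d-1}$ may be disjoint, or may meet only within a single cycle (where a splice can split rather than merge). The strengthened hypothesis $P(d-1)$ lets you prescribe exactly one edge of $N_0$ and one edge of $N_1$, hence at best one guaranteed common projected edge; but the assembled $2$-factor can have up to $m$ cycles, and driving it down to one component needs up to $m-1$ splices, each requiring a fresh common edge sitting in the right pair of cycles. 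So the ``careful control of cyclic positions'' you defer is not a technicality --- it is the entire difficulty, and the induction as set up gives you no handle on it. (Whether $P(d)$ is even true is a further claim you would owe a proof of; the argument below never needs it.)

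The proof in the paper (Theorem \ref{GK2}, of which Theorem \ref{fink} is the special case $G=\mathcal{Q}_2$, and which follows Fink's original argument) avoids merging altogether by breaking the symmetry between the two copies: the two auxiliary pairings are \emph{not} chosen independently. One first pairs the crossing endpoints in $G_1$ arbitrarily (the set $L$) and extends $P_1 \cup L$ to a Hamiltonian cycle $P_1 \cup L \cup M$; only then is the auxiliary pairing $R$ of $G_2$ defined, by projecting the connectivity of $P_1 \cup M$: one puts $\overline{x}\,\overline{y} \in R$ precisely when $P_1 \cup M$ contains an $(x,y)$-path, where $x\overline{x}, y\overline{y} \in X$. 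Extending $P_2 \cup R$ to a Hamiltonian cycle of $K_{G_2}$ and then replacing each $R$-edge by the corresponding path through $G_1$ yields a single Hamiltonian cycle \emph{by construction}: the information about how the first copy's paths link up is encoded in the pairing handed to the second copy, so no global repair step is needed and the plain PH-property suffices as inductive hypothesis. Your splice idea does in fact appear in the paper, but only in the case $X=\emptyset$, where it works because the second copy's matching is taken to be the literal copy $M'$ of $M$, so every cycle of $P_2 \cup M'$ is guaranteed to contain a spliceable edge; in the hypercube you could instead avoid $X=\emptyset$ entirely by splitting along a coordinate in which some matched pair differs.
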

Recall that the \emph{Cartesian product} $G \Box H$ of two graphs $G$ and $H$ is a graph whose vertex set is $V(G) \times V(H)$, and two vertices $(u_1, v_1)$ and $(u_2, v_2)$ are adjacent precisely if $u_1 = u_2$ and $v_1v_2 \in  E(H)$, or $u_1u_2 \in E(G)$ and $v_1 = v_2$.

Given a graph $G$, the \emph{prism operator} $\mathcal{P}(G) =G \Box K_2$ is the Cartesian product of $G$ with $K_2$. Note that it consists of two copies $G_1$ and $G_2$ of $G$ with the same vertex labelling as in $G$, and an edge between the vertices having the same label. Throughout the paper, we refer to these subgraphs, $G_1$ and $G_2$, as the main copies of $G$ in $\mathcal{P}(G)$. The result of a single application of the operator is usually called the \emph{prism graph} $\mathcal{P}(G)$ of $G$ (see \cite{PrismGraphs}), and repeated applications shall be denoted by powers, with $\mathcal{P}^k(G)$ being the prism graph of $\mathcal{P}^{k-1}(G)$. If needed we shall assume that $\mathcal{P}^0(G)=G$.

It is worth noting that for $d\geq 2$, $\mathcal{Q}_d=\mathcal{P}^{d-2}(Q_2)$. Hence, Theorem \ref{fink} is equivalent to saying that for each $k\geq 0$,  $\mathcal{P}^k(\mathcal{Q}_2)$ admits the PH-property. One might wonder whether it is possible to replace $\mathcal{Q}_2$ with some other initial graph. The main contribution of this paper is Theorem \ref{GK2}, which  generalises Theorem \ref{fink}. We obtain a much larger class of graphs with the PH-property by proving that for every graph $G$ having the PH-property, the graph $\mathcal{P}^k(G)$ has the PH-property for each $k\ge 0$. Hence, Kreweras' Conjecture, and therefore Theorem \ref{fink}, turn out to be special consequences of Theorem \ref{GK2} obtained starting from $G=\mathcal{Q}_2$, which is trivially PH.

Other results on this topic, dealing with the Cartesian product of graphs, were also obtained in \cite{AAAHST} and \cite{GauZer}. In particular, we state the following theorem which shall be needed in Section \ref{sec:iteratedproducts}.

\begin{theorem}[Alahmadi \emph{et al.}, 2015 \cite{AAAHST}]\label{thAAA}
Let $P_q$ be a path of length $q$. The graph $P_q\Box \mathcal{Q}_d$ admits the PH-property, for $d \geq 5$.
\end{theorem}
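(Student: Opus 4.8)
The plan is to prove the statement by induction on the length $q$ of the path, using Fink's Theorem~\ref{fink} on the hypercube layers as the engine. First I would fix the natural layered structure of $G=P_q\Box\mathcal{Q}_d$: the $q+1$ copies $H_0,\dots,H_q$ of $\mathcal{Q}_d$ indexed by the vertices of $P_q$, with consecutive layers $H_i,H_{i+1}$ joined by a perfect matching $R_i$ (the ``rungs'' coming from the path edges). The base case $q=1$ is immediate, since $P_1\Box\mathcal{Q}_d=\mathcal{Q}_{d+1}$ is PH by Theorem~\ref{fink}; thus all the content lies in the inductive step, where one must add a fresh hypercube layer while keeping control of an arbitrary, adversarial pairing $M$.

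The technical heart is that the bare PH-property is too weak to push through an induction, because when one extends a pairing separately on two blocks of layers the two resulting Hamiltonian cycles must be merged into one. To make merging possible I would work instead with a strengthened, ``edge-prescribed'' version of the property: for every pairing $M$ and every layer-edge $e\notin M$ one can find an extension $N$ with $e\in N$. This is exactly the kind of statement Fink establishes for the hypercube, and it is the form that is stable under the layering operation. The merging step itself is a rung-swap: if two consecutive layers $H_i,H_{i+1}$ carry Hamiltonian cycles whose extensions both use the ``same'' hypercube edge $ab$ (that is, $a^ib^i$ and $a^{i+1}b^{i+1}$), then deleting these two edges and inserting the two rungs $a^ia^{i+1},\,b^ib^{i+1}\in R_i$ splices the cycles together; the edge-prescribed property guarantees such a common edge can always be arranged.

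Given these tools, the inductive step would proceed by splitting the edges of the pairing $M$ according to the layers of their endpoints. Edges of $M$ lying inside a single block of layers are handled by the inductive hypothesis applied to that block; the essential difficulty is the set of \emph{cross} pairing edges joining different blocks, and, worse, non-adjacent layers, since these break the clean separation and constrain where the internal extensions may close up. I would absorb each such cross edge by routing it through a short path inside one hypercube layer, using the fact that $\mathcal{Q}_d$ is highly connected and, for $d\ge 5$, has enough vertex-disjoint structure to accommodate all of the routings simultaneously; this is precisely where the hypothesis $d\ge 5$ is consumed.

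The hard part will be exactly this simultaneous handling of the cross pairing edges together with the single-cycle requirement: one must choose the per-layer (strong) Fink extensions so that they are mutually compatible, prescribing the right rung-swappable edge between every pair of consecutive layers so that the global union is one Hamiltonian cycle rather than a disjoint union of several, while no adversarially placed cross edge obstructs a rung-swap. Checking that $d\ge 5$ always leaves enough free edges and connecting paths to realise all of these constraints at once is the crux of the argument.
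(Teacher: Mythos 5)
First, note that the paper does not contain a proof of Theorem \ref{thAAA} at all: it is imported verbatim from \cite{AAAHST} (where it appears as Theorem 5), so your attempt can only be judged on its own merits, not against an internal argument.

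Judged that way, there is a genuine gap, and it sits exactly at the crux. Your entire induction is powered by the ``edge-prescribed'' property of $\mathcal{Q}_d$ --- for every pairing $M$ and every edge $e\in E(\mathcal{Q}_d)\setminus M$ there is an extension $N$ with $e\in N$ --- which you dismiss as ``exactly the kind of statement Fink establishes.'' It is not. Fink's theorem (Theorem \ref{fink}) is the plain PH-property, with no control whatsoever over which edges of $\mathcal{Q}_d$ end up in $N$; indeed Fink's inductive construction outputs a matching confined to the two subcubes of the splitting, so forcing a prescribed edge (in particular one crossing the split) into $N$ requires genuinely new work. Since this unproven strengthening is what makes your rung-swap merging possible, the proof has a hole precisely where its content should be. The red flag is visible in the quantifiers: in your sketch the hypothesis $d\geq 5$ is consumed only in a vague connectivity/routing step, while the prescribed-edge lemma is claimed for free; if that were really so, the same induction would prove the statement for $d=3,4$, cases which this very paper lists as open in its final remarks. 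So $d\geq 5$ must be doing its work inside the statement you assumed, not where you placed it.

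A secondary gap is your treatment of pairing edges joining distinct, possibly non-adjacent, layers: ``absorb each such cross edge by routing it through a short path inside one hypercube layer, using high connectivity'' is not an argument, and it is also the wrong tool. The standard device, used by Fink and by this paper's own proof of Theorem \ref{GK2} (Case 2), needs no connectivity hypothesis: split the graph into a block of layers $B$ and the remaining block $B'$; if some pairing edges cross the interface, complete the partial pairing on $B$ by an arbitrary auxiliary pairing $L$ of the crossing endpoints, extend it inside $B$, let the resulting paths in $(M\cap K_B)\cup N_B$ induce a pairing of the crossing endpoints lying in $B'$, and extend that inside $B'$; the crossing edges themselves then stitch everything into a single Hamiltonian cycle, so no merging is needed in this case. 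Merging --- and hence the prescribed-edge property --- is needed only when \emph{no} pairing edge crosses the interface. Reorganizing your case analysis around this dichotomy would reduce the whole proof to the single missing lemma above, which (for $d\geq 5$) is in substance what \cite{AAAHST} establishes and what you would still have to supply.
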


The above theorem is stated as Theorem 5 in \cite{AAAHST}, where some other results apart from the statement above are proved. We use this result to obtain a similar one for every connected graph $G$ (see Theorem \ref{thm:GboxQd}). More precisely, we prove that for every arbitrary connected graph $G$, the graph $\mathcal{P}^k(G)$ has the PH-property for a sufficiently large $k$, depending on the minimum number of leaves over all spanning trees of $G$. We refer the reader to \cite{AGZ-Rook} and \cite{betwixt} for other papers dealing with the Pairing-Hamiltonian property and related concepts under some graph operations.

\section{Generalising Fink's result}

As stated in the introduction, this section shall be devoted to generalising Theorem \ref{fink}.

\begin{theorem}\label{GK2}
Let $G$ be a graph having the PH-property. Then, for each $k\geq0$, $\mathcal{P}^k(G)$ admits the PH-property.
\end{theorem}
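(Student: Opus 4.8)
The plan is to argue by induction on $k$. Since $\mathcal{P}^k(G)=\mathcal{P}(\mathcal{P}^{k-1}(G))$ and the PH-property is precisely the feature we wish to propagate, it suffices to prove the single inductive step: if a graph $G$ has the PH-property then so does its prism $\mathcal{P}(G)=G\Box K_2$, the base case $k=0$ being the hypothesis on $G$ itself. Throughout I write $G_1,G_2$ for the two copies of $G$ inside $\mathcal{P}(G)$, use $v_i^{j}$ for the copy in $G_j$ of the vertex $v_i\in V(G)$, and set $F=\{v_i^{1}v_i^{2}:v_i\in V(G)\}$ for the set of vertical (rung) edges. Fixing an arbitrary pairing $M$ of $\mathcal{P}(G)$, that is, a perfect matching of $K_{\mathcal{P}(G)}$, I must produce a perfect matching $N\subseteq E(\mathcal{P}(G))$ with $M\cup N$ a Hamiltonian cycle of $K_{\mathcal{P}(G)}$.

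First I would partition $M$ according to where its edges sit: let $M_1$ (resp.\ $M_2$) consist of the edges of $M$ with both endpoints in $G_1$ (resp.\ $G_2$), and let $M_\times$ be the crossing edges, those with one endpoint in each copy. The cleanest case is $M_\times=\emptyset$: then $M_1$ and $M_2$ are pairings of $G_1$ and $G_2$, each of which extends, by the PH-property of $G$, to a Hamiltonian cycle $H_1=M_1\cup N_1$ of $G_1$ and $H_2=M_2\cup N_2$ of $G_2$. To glue these two disjoint cycles into one I would seek an edge $v_a^{1}v_b^{1}\in N_1$ whose mirror $v_a^{2}v_b^{2}$ lies in $N_2$; deleting both and inserting the two rungs $v_a^{1}v_a^{2},\,v_b^{1}v_b^{2}\in F$ merges $H_1$ and $H_2$ into a single Hamiltonian cycle while preserving every edge of $M$. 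To guarantee such a mergeable mirrored pair, I expect to need a strengthened induction hypothesis of the form \emph{every pairing of $G$ extends to a Hamiltonian cycle that in addition contains a prescribed edge $e\in E(G)$} (chosen as a genuine edge of $G$ avoided by $M_1$ and $M_2$), so that $e$ is forced into both $N_1$ and $N_2$ at once.

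The substantial case is $M_\times\neq\emptyset$, and this is where I expect the main difficulty to lie. The natural move is to fold $G_2$ onto $G_1$ by identifying $v_i^{2}$ with $v_i^{1}$; under this identification $M_1$, $M_2$ and the crossing edges together cover every vertex of $V(G)$ exactly twice, so they form a $2$-regular multigraph $W$ on $V(G)$, a disjoint union of cycles in which each vertex is incident to the image of its $G_1$-partner and of its $G_2$-partner. The aim is to read off from $W$ a single pairing $M'$ of $G$, feed it to the PH-property of $G$, and then lift the resulting Hamiltonian cycle back to $\mathcal{P}(G)$, using the rungs in $F$ to account for the crossing edges. The obstruction is twofold: the cycles of $W$ need not be even (an all-crossing matching coming from a $3$-cycle permutation already produces a triangle), so one cannot merely take alternate edges of $W$ as $M'$; and even once a Hamiltonian cycle of $G$ is obtained, its lift splits \emph{a priori} into several cycles of $\mathcal{P}(G)$, one per orbit of the crossing pattern, exactly as $M\cup F$ decomposes into the cycles of the associated permutation. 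Thus the heart of the argument is a careful simultaneous choice of the projected pairing and of which crossing edges are routed through which copy, designed so that $M'$ is genuinely a perfect matching of $K_G$, so that all of $M$ survives in the lift, and so that the rungs chosen for $N$ stitch the lifted pieces into one connected cycle.

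I would organise the crossing case by a secondary induction on $|M_\times|$, peeling off crossing edges in mergeable pairs and absorbing them into rungs of $N$, thereby reducing step by step to the case $M_\times=\emptyset$ already handled. The connectivity bookkeeping, namely ensuring that each reduction genuinely joins two cycles rather than spawning a new component, is the step I expect to be most delicate, and the one for which the strengthened PH statement with a prescribed forced edge should again be the key tool. Once the single implication $G\ \text{PH}\Rightarrow\mathcal{P}(G)\ \text{PH}$ is established, Theorem \ref{GK2} follows immediately by induction on $k$.
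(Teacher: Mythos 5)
Your high-level frame (induction on $k$ via the single step ``$G$ is PH $\Rightarrow$ $\mathcal{P}(G)$ is PH'', the partition $M=M_1\cup M_2\cup M_\times$, and the case split on $M_\times$) coincides with the paper's, but both of your cases have genuine gaps. In the case $M_\times=\emptyset$ you extend $M_1$ and $M_2$ \emph{independently} to Hamiltonian cycles $M_1\cup N_1$ and $M_2\cup N_2$ and then hope to find an edge of $N_1$ whose mirror lies in $N_2$; this strategy actually fails on the very first nontrivial instance. Take $G=\mathcal{Q}_2$ with vertices $1,2,3,4$ in cyclic order, $M_1=\{12,34\}$ and $M_2=\{23,41\}$ (so $\mathcal{P}(G)=\mathcal{Q}_3$): the unique admissible extensions are $N_1=\{23,41\}$ and $N_2=\{12,34\}$, which share no mirrored edge, so no merge of your form exists. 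Your proposed repair, a strengthened hypothesis forcing a prescribed edge $e\in E(G)$ into the extension, is doubly unavailable: first, the theorem assumes only that $G$ is PH, so the base case of a strengthened induction is not given and you never show PH implies the forced-edge property; second, in the example above every edge of $G$ has a copy in $M_1$ or in $M_2$, so the prescribed edge ``avoided by $M_1$ and $M_2$'' that you require does not even exist. The paper sidesteps all of this by invoking the PH-property in $G_1$ only: it takes $M$ with $M_1\cup M$ Hamiltonian in $K_{G_1}$ and then uses in $G_2$ the \emph{mirror copy} $M'$ of $M$, not a fresh PH-extension. The union $M_2\cup M'$ is a disjoint union of even cycles (2-cycles allowed), each containing at least one edge of $M'$; deleting from each such cycle one edge $e_i'\in M'$ together with its mirror $e_i\in M$ and inserting the two rungs at their endpoints stitches every cycle into the $G_1$-cycle. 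There the mirrored pairs exist by construction, which is exactly what your version cannot guarantee.

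For $M_\times\neq\emptyset$ your text is a plan rather than a proof: you set up the folded $2$-regular multigraph $W$, correctly identify the two obstructions (odd cycles in $W$, and lifts splitting into several components), but then defer the resolution to ``a careful simultaneous choice'' and a ``secondary induction peeling off crossing edges in mergeable pairs'' whose peeling step is never defined and whose connectivity bookkeeping again leans on the unproved forced-edge hypothesis. The paper's Case 2 is Fink's argument and needs no such tool: with $|M_\times|=2r$, complete $M_1$ by an \emph{arbitrary} set $L$ of $r$ edges of $K_{G_1}$ to a pairing and extend it by PH to a Hamiltonian cycle $M_1\cup L\cup M$ of $K_{G_1}$; deleting $L$ leaves $r$ paths contained in $M_1\cup M$ whose endpoints are exactly the $G_1$-ends of the crossing edges; replace each such path by the single edge of $K_{G_2}$ joining the $G_2$-partners (under $M_\times$) of its two endpoints, obtaining a set $R$ such that $M_2\cup R$ is a pairing of $G_2$; extend it by PH to $M_2\cup R\cup M'$, and finally splice each $R$-edge back open through $G_1$ via its two crossing edges and its path. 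The result is that $M_1\cup M_2\cup M_\times\cup M\cup M'$ is a Hamiltonian cycle of $K_{\mathcal{P}(G)}$ with $M\cup M'$ a perfect matching of $\mathcal{P}(G)$. To salvage your write-up you would have to either prove your forced-edge strengthening for all PH graphs (which, as the $\mathcal{Q}_2$ example shows, cannot even be formulated for all pairings) or replace both cases by constructions of the paper's kind.
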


\begin{proof}
Consider $\mathcal{P}(G)$ and let $G_1$ and $G_2$ be the two main copies of the graph $G$ in $\mathcal{P}(G)$. Then, a pairing  $P$ of $\mathcal{P}(G)$ can be partitioned into three subsets $P_1 \cup P_2 \cup X$ where:

\begin{linenomath}
	$$P_i=\{xy \in P~|~\{x,y\} \subset V(G_i),~ \textrm{for each }i\in\{1,2\}\}; \textrm{ and}$$
	$$X=\{xy \in P~|~x \in V(G_1),~y \in V(G_2)\}.$$
\end{linenomath}
Note that $|X| \equiv 0\pmod 2$ since each $G_i$ admits the PH-property and so are both of even order. We shall distinguish between two cases: whether $X$ is empty or not.\\

\noindent\textbf{Case 1.} $|X|=0$.\

\begin{figure}[h!]
    \centering
    \includegraphics[width=.6\textwidth]{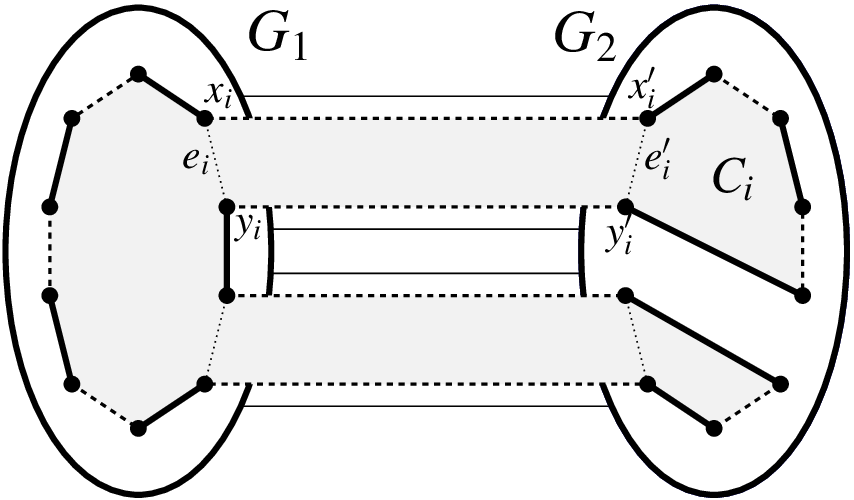} 
    \caption{An extension of the pairing $P$, depicted in bold, when $|X|=0$. The dashed edges represent those in $N$, whereas the dotted edges are the edges $e_i$ and $e'_i$.}
    \label{fig:1b}
\end{figure}

In this case, $P=P_1 \cup P_2$. Since $G_1$ has the PH-property, there exists a perfect matching $M$ of $G_1$ such that $P_1 \cup M$ is a Hamiltonian cycle of $K_{G_1}$. We shall denote the corresponding vertex of $x \in V(G_1)$ by $x' \in V(G_2)$. Let $M'$ be the perfect matching of $G_2$ such that $x'y' \in M'$ if and only if $xy \in M$. In other words, $M'$ is the copy of $M$ in $G_2$. We observe that $P_2 \cup M'$ consists of the union of cycles of even length, say $C_1,\dots , C_t$. Note that cycles of length 2 shall be allowed in the sequel as they arise when $P_2 \cap M' \neq \emptyset$. For each $i \in \{1,\dots,t\}$, we choose an edge $e_i'=x_i'y_i' \in M' \cap C_i$ and we denote the corresponding edge in $M$ by $e_i=x_iy_i$. Consequently, the set 
\begin{linenomath}
$$N=(M \setminus \{ e_1,\ldots, e_t\}) \cup (M' \setminus \{e'_1,\ldots, e'_t\}) \cup \{ x_ix_i',y_iy_i'~|~i\in\{1,\dots,t\} \}
$$
\end{linenomath}
is a perfect matching of $\mathcal{P}(G)$ such that $P \cup N$ is a Hamiltonian cycle of $K_{\mathcal{P}(G)}$. We note that the vertex $x_i'$ in $G_2$ corresponds to the vertex $x_i$ in $G_1$, see Figure \ref{fig:1b}.\\

\noindent\textbf{Case 2.} $|X|=2r>0$.

\begin{figure}[h!]
    \centering
    \includegraphics[width=.6\textwidth]{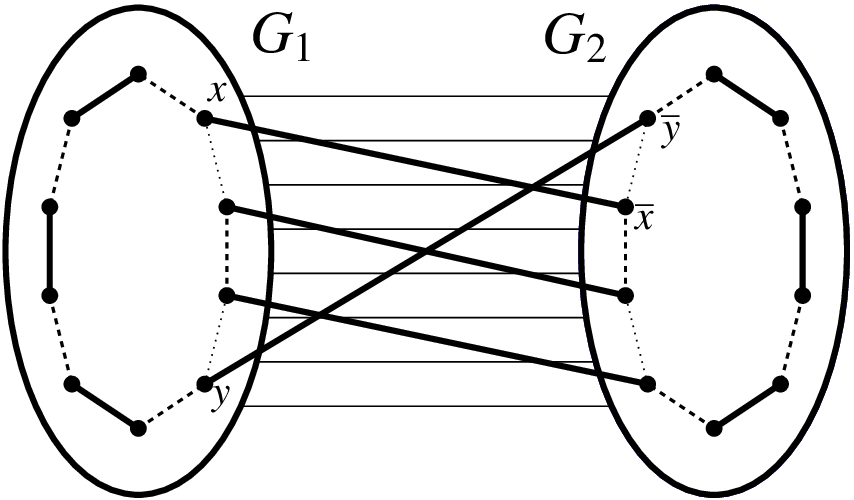} 
    \caption{An extension of the pairing $P$, depicted in bold, when $|X|= 2r>0$. The dashed edges represent those in $M$ and $M'$, whereas the dotted edges are those in $L$ and $R$.}
    \label{fig:2}
\end{figure}

In this case we consider an analogous argument to the one used by Fink to prove Theorem \ref{fink}. Since $|X| \neq 0$, $P_1$ is a matching of $K_{G_1}$ which is not perfect, as there are $2r$ unmatched vertices. Let $L$ be an arbitrary set of $r$ edges of $K_{G_1}$ such that $P_1 \cup L$ is a pairing of $G_1$. Since $G_1$ has the PH-property, there exists a perfect matching $M$, of $G_1$, such that $P_1 \cup L \cup M$ is a Hamiltonian cycle of $K_{G_1}$. Next we define the following set

\begin{linenomath}
    \[
  R = \left\{ \overline{x}~\overline{y}\in E(K_{G_2})\ \middle\vert \begin{array}{l}
    \exists\ x,y \in V(G_1) \text{ with } \{x\overline{x},y\overline{y}\} \subseteq X  \text{ and}\\
     \exists \text{ an } (x,y) \text{-path}   \text{ contained in }P_1 \cup M
  \end{array}\right\},
\]
\end{linenomath}
such that $P_2 \cup R$ is a pairing of $G_2$. Note that $x~\overline{x}$ and $y~\overline{y}$ are edges in $K_G$ since $|X| \neq 0$, and their endvertices might not be corresponding vertices in $G_1$ and $G_2$, as they were in the former case.
Since $G_2$ has the PH-property there exists a perfect matching $M^{\prime}$ of $G_2$, such that $P_2 \cup R \cup M^{\prime}$ is a Hamiltonian cycle of $G_2$. It follows that $P_1 \cup P_2 \cup X \cup M \cup M^{\prime}$ is a Hamiltonian cycle of $K_{\mathcal{P}(G)}$ in which $M \cup M^{\prime}$ is a perfect matching of $\mathcal{P}(G)$, see Figure \ref{fig:2}. 

This proves that  $\mathcal{P}(G)$ has the PH-property and thus, by iterating the prism operator, the result follows.
\end{proof}




\section{Convergence of general graph prisms to the PH-property}\label{sec:iteratedproducts} 
In this section we show that given any connected graph $G$, there exist a sufficiently large integer $k$ such that $\mathcal{P}^k(G)$ has the PH-property. In other words, after iterating the prism operator a sufficient number of times, the resulting graph will have the PH-property. We remark that if a graph contains a spanning subgraph admitting the PH-property, then the graph itself admits the PH-property. Hence, by Theorem \ref{thAAA}, the next corollary follows.

\begin{corollary}
Let $G$ be a traceable graph. For $k \geq 5$, the graph $\mathcal{P}^k(G)$ has the PH-property.
\end{corollary}

Recall that a \emph{traceable} graph is a graph admitting a Hamiltonian path. Next, we show that starting from an arbitrary connected graph $G$, we can always obtain a traceable graph by iterating the prism operator a suitable number of times. To this purpose, we need the following definition and lemma.
\begin{definition}
Let $G$ be a connected graph. The \emph{minimum leaf number} of $G$, denoted by $\textrm{ml}(G)$, is the minimum number of leaves over all spanning trees of $G$.
\end{definition}

Clearly, for any connected graph $G$, $\textrm{ml}(G)\geq 2$, and $\textrm{ml}(G)=2$ if and only if $G$ is traceable.

\begin{lemma}\label{lemma1} 
Let $G$ be a connected graph with $\textrm{\emph{ml}}(G) >2$. Then, $\textrm{\emph{ml}}(G) > \textrm{\emph{ml}}(\mathcal{P}(G))$.
\end{lemma}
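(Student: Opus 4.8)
The plan is to reduce the statement to the case where $G$ is a tree and then to exhibit an explicit spanning tree of the prism carrying one fewer leaf. Let $T$ be a spanning tree of $G$ realising $\textrm{ml}(G)=\ell$, so $T$ has exactly $\ell\ge 3$ leaves. Since $T$ is a spanning subgraph of $G$, the prism $\mathcal{P}(T)=T\,\Box\,K_2$ is a spanning subgraph of $\mathcal{P}(G)$, and any spanning tree of $\mathcal{P}(T)$ is in particular a spanning tree of $\mathcal{P}(G)$; hence $\textrm{ml}(\mathcal{P}(G))\le \textrm{ml}(\mathcal{P}(T))$. It therefore suffices to build a spanning tree of $\mathcal{P}(T)$ with at most $\ell-1$ leaves, which yields $\textrm{ml}(\mathcal{P}(G))\le \ell-1<\ell=\textrm{ml}(G)$.

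To construct such a tree I would work with the two copies $T_1,T_2$ and the rungs joining corresponding vertices, and decompose $T$ into its \emph{pendant paths} (the maximal paths from a leaf up to the first branch vertex) and its \emph{trunk} (the subtree spanned by the branch vertices). The key device is to \emph{fold} each pendant path across the two copies: descend the path in $T_1$, cross the rung at the leaf, and climb back through $T_2$. A complete fold returns to the branch vertex and so turns the corresponding leaf into an internal vertex while creating no new leaf, whereas a fold that stops short leaves a single pendant end in the second copy. At a branch vertex $\beta$ the incident legs are threaded consecutively through the two copies $\beta_1$ and $\beta_2$, exactly as a depth-first traversal of $T$ alternates descents (read off in $T_1$) with ascents (read off in $T_2$). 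With this bookkeeping the only surviving leaves are the two ends of the global traversal together with $\deg_T(\beta)-3$ pendant ends at each branch vertex $\beta$; by the tree identity expressing the number of leaves as $2+\sum_{v}(\deg_{}v-2)^{+}$, this totals $\ell-b$, where $b$ is the number of branch vertices of $T$. As $b\ge 1$ whenever $\ell\ge 3$, the constructed tree has at most $\ell-1$ leaves.

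I expect the main obstacle to be the bookkeeping at the branch vertices: one must decide, at each $\beta$, which incident leg is folded completely and which are folded partially so that (i) the union of all folds and the two trunk copies is connected and acyclic, i.e. a genuine spanning tree rather than a graph with cycles or several components, and (ii) no spurious leaf is created at the branch vertices, in particular at the copies $\beta_2$ sitting in the second copy of the trunk. This is exactly where the hypothesis $\textrm{ml}(G)>2$ is used: it guarantees at least one branch vertex carrying at least two pendant legs, which furnishes both the leg needed to bridge the two copies of the trunk and the surplus that is converted into the saved leaf. The cleanest way to discharge (i) and (ii) simultaneously should be to fix a depth-first ordering of $T$ rooted at a leaf, let the copy-switches be dictated by that ordering, and then confirm the leaf count directly from the identity $2+\sum_{v}(\deg_{}v-2)^{+}$ applied to the resulting tree.
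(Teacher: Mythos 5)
Your reduction to a spanning tree $T$ with $\ell=\textrm{ml}(G)$ leaves is fine (the paper does the same thing implicitly, working inside two copies of a minimum-leaf spanning tree), but what follows is a plan rather than a proof, and the plan as written contains a real inconsistency. You describe the target object as ``the union of all folds and the two trunk copies''. If both copies of the trunk are present, then \emph{globally} at most one complete fold can be used: the two trunk copies are connected subgraphs, and any two complete folds are two disjoint paths joining them, so together they close a cycle. (Concretely: for the tree with two adjacent degree-$3$ vertices $u,v$, each with two pendant edges, folding one leg completely at $u$ and one at $v$ while keeping $u_1v_1$ and $u_2v_2$ creates an $8$-cycle.) An edge count makes this quantitative: with the doubled trunk, a spanning tree of $\mathcal{P}(T)$ must contain exactly one complete fold and $\ell-1$ partial folds, hence has exactly $\ell-1$ leaves. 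Your claimed count of $\ell-b$, with a complete fold ``at each branch vertex'', is therefore unattainable under your own constraint (i); it can only be realised by the genuinely different DFS-weaving construction, in which each trunk edge is used \emph{once}, in one copy or the other, not twice. So the two descriptions you give are conflated, and the part you explicitly defer (``I expect the main obstacle to be the bookkeeping at the branch vertices\dots the cleanest way\dots should be\dots'') is precisely where the proof lives: you never establish that any specific construction is acyclic, connected, and has the asserted number of leaves. That said, either construction can be repaired, and the weaker conclusion $\textrm{ml}(\mathcal{P}(G))\le\ell-1$ is all the lemma needs.

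For comparison, the paper's proof avoids all of this bookkeeping with a local edge-exchange argument. Take the two copies $R_1,R_2$ of a minimum-leaf spanning tree and the $t=\textrm{ml}(G)$ rungs $e_0,\dots,e_{t-1}$ joining corresponding leaves. Then $T_0=(R_1\cup R_2)+e_0$ is a spanning tree with $2t-2$ leaves, and for $j=1,\dots,t-1$ one sets $T_j=T_{j-1}+e_j-f_j$, where $f_j$ is an edge of the unique cycle of $T_{j-1}+e_j$ incident to a vertex of degree at least $3$ on that cycle; such a vertex exists exactly because $\textrm{ml}(G)>2$ forces the cycle to be a proper subgraph. Each exchange destroys two leaves (the endpoints of $e_j$) and creates at most one (the other endpoint of $f_j$), so $T_{t-1}$ has at most $2t-2-(t-1)=t-1$ leaves. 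If you want to salvage your approach, the cleanest route is the simple version of it: both trunk copies, exactly one complete fold, and the $\ell-1$ partial folds distributed between the two copies so that no branch-vertex copy is left with degree one; proving $\ell-b$ via DFS weaving is possible (and in examples it is sharp, e.g.\ it yields Hamiltonian paths in prisms of caterpillars), but it is substantially more work than the lemma requires.
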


\begin{proof}
Suppose that $\textrm{ml}(G) =t>2$ and let $G_1$ and $G_2$ be the two copies of $G$ in $\mathcal{P}(G)$. Let $R_1,R_2$ be two copies of a spanning tree of $G$ with $t$ leaves in $G_1$ and $G_2$, respectively. Let $S=\{e_0,e_1,\dots,e_{t-1}\}$ be the set consisting of the $t$ edges which connect a leaf of $R_1$ to the corresponding leaf of $R_2$. Consequently, we have that $T_0=(R_1 \cup R_2) + e_0$ is a spanning tree of $\mathcal{P}(G)$ with $2t-2$ leaves. Moreover, $T_0+e_1$ has exactly one cycle, say $C_1$. Since $\textrm{ml}(G) >2$, $C_1$ is a proper subgraph of $T_0 +e_1$ and there exists a vertex $v$ of $C_1$ such that $deg_{T_0+e_1}(v) >2$. We note that the removal of an edge of $C_1$, say $f_1$, which is incident to $v$ gives rise to a spanning tree $T_1=T_0+e_1-f_1$ of $\mathcal{P}(G)$ with at most $2t-3$ leaves. Then, for every $j\in \{2,\ldots, t-1\}$, starting from $j=2$ and continuing consecutively up to $t-1$, we choose an edge $f_j$ from $E(T_{j-1}+e_j)$ lying on the unique cycle in $T_{j-1}+e_j$ and incident to a vertex of degree at least 3 in $T_{j-1}+e_j$. We then let $T_j$ be equal to $T_{j-1}+e_j-f_j$, which by a similar argument to the above is a spanning tree of $\mathcal{P}(G)$ with at most $2t-2-j$ leaves. Therefore, $T_{t-1}$ has at most $t-1$ leaves and $\textrm{ml}(\mathcal{P}(G)) \leq t-1 < \textrm{ml}(G)$.
\end{proof}	

From the above statements, it is easy to obtain the following result.

\begin{proposition}\label{PHtrac}
Let $G$ be a connected graph. Then, $\mathcal{P}^k(G)$ is traceable for all $k \geq \textrm{\emph{ml}}(G)-2$.
 \end{proposition}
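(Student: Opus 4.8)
The plan is to iterate Lemma \ref{lemma1} as a strict monovariant to drive the minimum leaf number down to $2$, and then invoke the elementary fact that the prism operator preserves traceability. Write $t = \textrm{ml}(G)$ and set $t_i = \textrm{ml}(\mathcal{P}^i(G))$, so that $t_0 = t$. Recall from the remark preceding the proposition that $t_i \geq 2$ for every $i$, with equality exactly when $\mathcal{P}^i(G)$ is traceable; the whole task is therefore to show $t_i = 2$ for all $i \geq t-2$.

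First I would handle the descent. By Lemma \ref{lemma1}, whenever $t_i > 2$ we have $t_{i+1} < t_i$, i.e. $t_{i+1} \leq t_i - 1$, so the sequence $(t_i)$ strictly decreases by at least one unit per step until it first attains the value $2$. A short induction then yields the bound $t_i \leq \max\{t-i,\,2\}$ for all $i$; in particular, evaluating at $i = t-2$ gives $t_{t-2} \leq 2$, which combined with $t_{t-2} \geq 2$ forces $t_{t-2} = 2$. Hence $\mathcal{P}^{t-2}(G)$ is traceable, settling the claim at the single value $k = \textrm{ml}(G)-2$.

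To cover all $k \geq \textrm{ml}(G)-2$, I would establish separately that the prism of a traceable graph is again traceable (indeed Hamiltonian): given a Hamiltonian path $v_1 v_2 \cdots v_n$ of one copy $H_1$ of $H$ inside $\mathcal{P}(H)$, one traverses it from $v_1$ to $v_n$, crosses the connecting edge $v_n v_n'$ into the second copy $H_2$, and returns along the mirrored Hamiltonian path from $v_n'$ to $v_1'$, producing a Hamiltonian path of $\mathcal{P}(H)$ (closing with $v_1 v_1'$ even yields a Hamiltonian cycle). Consequently traceability, once achieved, is never lost under further applications of $\mathcal{P}$, so $\mathcal{P}^k(G)$ is traceable for every $k \geq \textrm{ml}(G)-2$. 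This same preservation fact also disposes of the base case $\textrm{ml}(G) = 2$, where $G$ is already traceable and $\textrm{ml}(G)-2 = 0$.

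The argument is a straightforward descent, so there is no deep obstacle; the one point requiring care is that Lemma \ref{lemma1} is only valid while $\textrm{ml} > 2$ and therefore cannot by itself propagate the conclusion to values of $k$ beyond $\textrm{ml}(G)-2$. This is precisely the role of the standalone observation that $\mathcal{P}$ preserves traceability, which both pins the value at $2$ inside the inductive bound and extends the conclusion to all larger $k$.
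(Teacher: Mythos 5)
Your proof is correct and follows the same route as the paper: iterate Lemma \ref{lemma1} as a strict monovariant to force the minimum leaf number down to $2$ after at most $\textrm{ml}(G)-2$ applications of the prism operator. In fact, your write-up is more complete than the paper's own proof, which consists of a single sentence applying Lemma \ref{lemma1} repeatedly and concluding traceability of $\mathcal{P}^{\textrm{ml}(G)-2}(G)$; that argument addresses only the single value $k=\textrm{ml}(G)-2$ and leaves implicit two points that you handle explicitly. First, Lemma \ref{lemma1} says nothing once the minimum leaf number reaches $2$, so if the descent terminates early one needs to know that traceability is not subsequently lost; your bound $t_i\leq\max\{t-i,\,2\}$ is only valid because you feed the preservation fact into the induction. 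Second, the statement claims the conclusion for \emph{all} $k\geq \textrm{ml}(G)-2$, and your standalone observation that the prism of a traceable graph is traceable (indeed Hamiltonian, via the path-mirror-path construction) is exactly what propagates the conclusion to larger $k$. So the approach is the same, but your version closes small gaps that the paper glosses over.
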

 
\begin{proof}
If we start from $G$ and apply the prism operator $\textrm{ml}(G)-2$ times, by Lemma \ref{lemma1}, the graph $\mathcal{P}^{\textrm{ml}(G)-2}(G)$ has $\textrm{ml}(\mathcal{P}^{\textrm{ml}(G)-2}(G))=2$.
 Consequently, it admits a Hamiltonian path.
\end{proof}

Combining Theorem \ref{thAAA} and Proposition \ref{PHtrac} we obtain the following.

\begin{theorem}\label{thm:GboxQd}
Let $G$ be a connected graph with $m=\textrm{\emph{ml}}(G)$, then $\mathcal{P}^{m+3}(G)$ has the PH-property.
\end{theorem}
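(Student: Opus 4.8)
The plan is to combine the two reduction steps that have just been established: Proposition \ref{PHtrac}, which drives the minimum leaf number down to the traceable threshold by iterating the prism operator, and the corollary immediately preceding it (derived from Theorem \ref{thAAA}), which upgrades traceability to the PH-property after five further applications of the operator. The whole argument is then a matter of composing these two facts and checking the exponent bookkeeping.

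First I would set $m=\mathrm{ml}(G)$ and invoke Proposition \ref{PHtrac} with $k=m-2$ to deduce that $H := \mathcal{P}^{m-2}(G)$ is traceable. This is legitimate because $G$ is connected, whence $m\geq 2$ and the exponent $m-2$ is a nonnegative integer; moreover $H$, being an iterated Cartesian product of connected graphs with $K_2$, is itself connected, so it is a valid input for the next step.

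Next I would apply the corollary obtained from Theorem \ref{thAAA} to the traceable graph $H$. Taking $k=5$ there yields that $\mathcal{P}^5(H)$ has the PH-property. The key observation is that the prism operator composes additively in its exponent, so that
\[
\mathcal{P}^5(H)=\mathcal{P}^5\bigl(\mathcal{P}^{m-2}(G)\bigr)=\mathcal{P}^{(m-2)+5}(G)=\mathcal{P}^{m+3}(G),
\]
which is exactly the claimed conclusion.

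There is essentially no obstacle here beyond verifying the arithmetic and the hypotheses of the two cited results; the substance of the theorem lives entirely in Proposition \ref{PHtrac} and in the corollary (ultimately in Theorem \ref{thAAA}). The only point warranting care is that the corollary demands a \emph{traceable} input, which is precisely what Proposition \ref{PHtrac} supplies at exponent $m-2$, and that the two reductions stack to the exponent $m+3$ rather than, say, $m+5$ or $m-2$. In the borderline case $m=2$ the graph $G$ is already traceable and the statement collapses to the corollary applied directly to $G$ with $k=5$, which is consistent with the general formula.
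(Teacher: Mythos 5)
Your proof is correct and follows essentially the same route as the paper's: reduce to a traceable graph via Proposition \ref{PHtrac} at exponent $m-2$, then apply Theorem \ref{thAAA} (via the corollary for traceable graphs) to add five more prism iterations, giving $\mathcal{P}^{m+3}(G)$. The only cosmetic difference is that the paper splits into the cases $m=2$ and $m>2$, whereas you handle both uniformly and note the borderline case separately, which is if anything slightly cleaner.
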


\begin{proof}
If $G$ is traceable, then $m=2$, and so, from Theorem \ref{thAAA} we have that $\mathcal{P}^5(G)$ has the PH-property. On the other hand, if $G$ is not traceable, then $m>2$. By Theorem \ref{PHtrac}, the graph $\mathcal{P}^{m-2}(G)$ is traceable. Hence, by Theorem \ref{thAAA}, $\mathcal{P}^{5}(\mathcal{P}^{m-2}(G))=\mathcal{P}^{m+3}(G)$ admits the PH-property.
\end{proof}

Let us note that the condition given in Proposition \ref{PHtrac}, and subsequently in Theorem \ref{thm:GboxQd}, is only a sufficient condition, though we do not believe it is necessary for sufficiently large values of $\textrm{ml}(G)$. Establishing an optimal lower bound for $k$ in Proposition \ref{PHtrac} in terms of $\textrm{ml}(G)$ would be an interesting problem to consider.

\section{Final remarks}

Several open problems were posed in \cite{AAAHST}. In particular, proving that the graph $P_q \Box \mathcal{Q}_d$ has the PH-property for $d=3,4$ and an arbitrary $q$ is still open. It is dutiful to note that we are aware that in case of a positive answer, Theorem \ref{thm:GboxQd} should be refined accordingly. 

A much more ambitious problem is to wonder whether it is enough for two graphs $G$ and $H$ to have the PH-property, for $G \Box H$ to have the PH-property as well. 
This latter question seems very difficult to prove. Here, we have shown, in Theorem \ref{GK2}, that it holds when $H$ is the hypercube, which is an iteration of the prism operator. In Theorem \ref{thm:GboxQd}, we see that even if $G$ does not have the PH-property, but is connected, a \emph{large enough} number of iterations of the prism operator make it \emph{converge} to a graph with the PH-property. As a matter of fact, we can define the parameter $\mathfrak{p}(G)$  as the smallest non-negative integer $\mathfrak{p}=\mathfrak{p}(G)$ such that $\mathcal{P}^{\mathfrak{p}}(G)$ admits the PH-property. It trivially follows that $\mathfrak{p}(G)=0$ if and only if $G$ is PH. Henceforth, the parameter $\mathfrak{p}(G)$ can be considered as a measure of how far a graph $G$ is from having the PH-property, with respect to the prism operator. Determining the behaviour of $\mathfrak{p}(G)$ for some special classes of graphs could be of interest in the study of the PH-property. 

We could also wonder if there are other graphs that speed up the convergence to the PH-property under the Cartesian product, or on the other hand if there are other products under which the convergence to the PH-property is guaranteed. It seems so if we consider the strong product of graphs.
The \emph{strong product} $G \boxtimes H$ is a graph whose vertex set is the Cartesian product $V(G) \times V(H)$ of $V(G)$ and $V(H)$, and two vertices $(u_1, v_1)$, $(u_2, v_2)$ are adjacent if and only if they are adjacent in $G \Box H$ or if $u_1u_2\in E(G)$ and $v_1v_2\in E(H)$.
It is trivial that $G \Box H$ is a subgraph of $G \boxtimes H$; hence, if $G\Box H$ has the PH-property, then $G \boxtimes H$ will inherit the same property as well.


A result from \cite{accordions} on accordion graphs easily implies that in the case of  Hamiltonian graphs, only one occurrence of the strong product with $K_2$ is enough to obtain a graph with the PH-property.

\begin{theorem}
Let $G$ be a Hamiltonian graph, then $G \boxtimes K_2$ has the PH-property.
\end{theorem}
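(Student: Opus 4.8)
The plan is to realise $G \boxtimes K_2$ as a supergraph of a graph already known to be PH, and then invoke the principle recalled at the start of Section \ref{sec:iteratedproducts}: if a graph contains a spanning subgraph with the PH-property, then the graph itself has the PH-property. Since $G$ is Hamiltonian, it contains a spanning cycle $C$. Because the strong product is monotone under the passage to spanning subgraphs of a factor (same vertex set $V(G)\times V(K_2)$, and every edge of a strong product is determined by edges of the factors), $C \boxtimes K_2$ is a spanning subgraph of $G \boxtimes K_2$. Hence it suffices to prove that $C \boxtimes K_2$ has the PH-property.

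The next step is to locate the accordion structure inside $C \boxtimes K_2$. Writing $C = C_n$ with the two layers $\{x_0,\dots,x_{n-1}\}$ and $\{y_0,\dots,y_{n-1}\}$, the vertex $x_i$ is adjacent to $x_{i\pm 1}$ along its layer, to $y_i$ through the rung, and to $y_{i\pm 1}$ through the two diagonals contributed by the strong product; thus $C_n \boxtimes K_2$ is $5$-regular. Deleting a single diagonal direction, say all edges of the form $x_{i+1}y_i$, leaves the two $n$-cycles together with the rungs $x_iy_i$ and the shifted chords $x_iy_{i+1}$, and this is precisely an accordion graph on $2n$ vertices (the one with unit shift). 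Consequently $C_n \boxtimes K_2$ contains a spanning accordion subgraph, and by the result of \cite{accordions} establishing the PH-property for accordion graphs, this spanning subgraph is PH. Applying the spanning-subgraph principle twice then gives the PH-property first for $C_n \boxtimes K_2$ and then for $G \boxtimes K_2$, completing the argument.

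The step that requires genuine care is the identification in the middle paragraph. One must check that the edge set remaining after deleting one diagonal direction matches, up to a relabelling of the vertices, the definition of accordion graph used in \cite{accordions}, and crucially that the cited PH-result applies to the whole family for \emph{every} cycle length $n \ge 3$ rather than only to some residue class of $n$ or to sufficiently large $n$. Once this bookkeeping is settled, the theorem is immediate, which is exactly what is meant by saying it follows \emph{easily} from the accordion result; no new extension argument for pairings is needed, as the PH-property is transported verbatim from the spanning accordion subgraph.
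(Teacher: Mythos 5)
Your proposal is correct and takes essentially the same route as the paper, which offers no detailed proof but merely asserts that the statement follows from the accordion result of \cite{accordions}; your chain of spanning subgraphs $G \boxtimes K_2 \supseteq C_n \boxtimes K_2 \supseteq$ (unit-shift accordion obtained by deleting one diagonal direction), combined with the spanning-subgraph principle from Section \ref{sec:iteratedproducts}, is precisely the intended argument. The sole point requiring verification against \cite{accordions} is the one you flag yourself, namely that the PH-result there covers the unit-shift accordion for every cycle length $n \geq 3$.
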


This suggests that the strong product may have a convergence to the PH-property in few steps also for general graphs.


\begin{thebibliography}{999}



\bibitem{AGZ-Rook} M. Abreu, J.B. Gauci and J.P. Zerafa, \emph{Saved by the rook: a case of matchings and Hamiltonian cycles}, Contrib. Discrete Math. (2023), accepted.

\bibitem{AAAHST} A. Alahmadi, R.E.L. Aldred, A. Alkenani, R. Hijazi, P. Sol\'{e} and C. Thomassen, \emph{Extending a perfect matching to a Hamiltonian cycle}, Discrete Math. Theor. Comput. Sci., \textbf{17(1)} (2015), 241--254.

\bibitem{PrismGraphs} R.E.L. Aldred and M.D. Plummer, \emph{Matching extension in prism graphs}, Discrete Appl. Math., \textbf{221} (2017), 25--32.


\bibitem{Fink} 
J. Fink, \emph{Perfect matchings extend to Hamilton cycles in hypercubes}, J. Combin. Theory Ser. B, \textbf{97} (2007), 1074--1076.

\bibitem{accordions} 
J.B. Gauci and J.P. Zerafa, \emph{Accordion graphs: Hamiltonicity, matchings and isomorphism with quartic circulants}, Discrete Appl. Math. \textbf{321} (2022), 126--137.

\bibitem{GauZer} J.B. Gauci and J.P. Zerafa, \emph{Perfect Matchings and Hamiltonicity in the Cartesian Product of Cycles}, Ann. Comb. \textbf{25} (2021), 789--796, \href{https://doi.org/10.1007/s00026-021-00548-1}{https://doi.org/10.1007/s00026-021-00548-1}.

\bibitem{Hag} R. H\"{a}ggkvist,
\emph{On $F$-Hamiltonian graphs},
in: J.A. Bondy, U.S.R. Murty (eds.), 
Graph Theory and Related Topics, Academic Press, New York, 1979, 219--231.


\bibitem{Kre} G. Kreweras,
\emph{Matchings and Hamiltonian cycles on hypercubes}, 
Bull. Inst. Combin. Appl. \textbf{16} (1996), 87--91.


\bibitem{LasVergnas}
M. Las Vergnas, 
\emph{Probl\`{e}mes de couplages et probl\`{e}mes hamiltoniens en th\'{e}orie des graphes},
Thesis, University of Paris 6, Paris, 1972.

\bibitem{betwixt} 
F. Romaniello and J.P. Zerafa, \emph{Betwixt and between 2-Factor Hamiltonian and Perfect-Matching-Hamiltonian Graphs}, Electron. J. Combin. \textbf{30(2)} (2023), \#P2.5.

\end{thebibliography}
\end{document}